\newcommand{\NN}{{\mathbb N}}
\newcommand{\RR}{{\mathbb R}}
\newcommand{\im}{\mathop{\rm im}\nolimits}
\newcommand{\rank}{\mathop{\rm rank}\nolimits}
\newcommand{\sortname}[1]{} 
\newcommand{\cP}{\mathcal{P}}
\newcommand{\cG}{\mathcal{G}}
\newcommand{\cM}{\mathcal{M}}
\newcommand{\fX}{\mathfrak{X}}
\newcommand{\eps}{\varepsilon}
\newcommand{\spann}{\mathop{\rm span}\nolimits}
\begin{document}

\title*{Overlap Splines and Meshless Finite Difference Methods} 
\author{Oleg Davydov}
\institute{Oleg Davydov \at 
University of Giessen,
Department of Mathematics,
Arndtstrasse 2,
35392 Giessen,
Germany,
\email{oleg.davydov@math.uni-giessen.de}}
%

\maketitle

\abstract{We consider overlap splines that are defined by connecting the patches of piecewise functions via common values at
given finite sets of nodes, without using any partitions of the computational domain.  It is shown that some classical finite
difference methods may be interpreted as collocation with overlap splines. Moreover, several versions of the meshless finite
difference methods,  such as the RBF-FD method, are equivalent to the collocation or discrete least squares with appropriately 
chosen spaces of overlap splines.}

\section{Introduction}
\label{intro}

In this paper we consider spaces of piecewise defined multivariate functions (for example piecewise polynomials) without
choosing a partition of the domain. Therefore the pieces overlap each other, and the "overlap spline" is a set of patches
rather than a well defined function. We connect the patches by requiring that they coincide at a finite number of points. 
In particular, it is not even expected that any underlying partition exists such that the patches form a continuous function. 
The absence of a partition eliminates the need for meshing algorithms in the numerical implementation of the overlap splines.
Despite the simplicity of this setting, basic questions such as determining the dimension of the spaces of
overlap splines do not seem trivial in general.
In the univariate case overlap splines are related to the well known discrete splines \cite{SchumSplines07}. 

In this paper we mainly concentrate on the simplest case of "interpolatory" overlap splines, and their applications to solving operator
equations by collocation and discrete least squares methods. Remarkably, it turns out that classical finite difference methods
can be interpreted as collocation with overlap splines. 
However, even more interesting is the connection to modern meshless finite difference methods, 
in particular RBF-FD methods \cite{FFprimer15},
that have grown in popularity in recent years due to their true meshless and isogeometric nature and excellent performance in
numerous numerical experiments. This is where the absence of partitions plays a decisive role. We hope that recasting
meshless finite difference methods in terms of overlap splines will contribute to developing
theoretical justification of these methods. 
Note that our recent work \cite{D23} in fact makes use of an overlap spline construction in 
its proof of the error bound for a discrete least squares version of RBF-FD. On the other hand, we hope that the overlap
spline perspective will help to develop new, improved versions of the meshless finite difference methods.

The paper is organized as follows. In Section~\ref{os} we introduce overlap splines, address
the dimension question and provide several examples, including discrete splines, multivariate polynomial and kernel based
patches. Section~\ref{pum} briefly discusses the application of overlap splines to the direct approximation of functions,
whereas Section~\ref{col} is devoted to the solution of operator equations and demonstrates how collocation and discrete least
squares with overlap splines lead to several known versions of the meshless finite difference method, including the RBF-FD method
and its oversampled variants.

\section{Overlap Splines}
\label{os}

For a finite set $M$ let $|M|$ denote its cardinality. For a vector $a\in\RR^N$ and any set $J\subset\{1,\ldots,N\}$  we
denote by $a|_{J}$ the vector $[a_j]_{j\in J}$.

Let $G$ be a set and $X=\{x_j\}_{j=1}^N$ its finite subset. Consider a collection 
$\cG=\{G_i\}_{i=1}^m$ of subsets of $G$ such that $G=\bigcup_{i=1}^m G_i$, and
a collection $\cP=\{P_i\}_{i=1}^m$ of finite dimensional vector spaces $P_i$ of real-valued 
functions defined on $G_i$, $i=1,\ldots,m$. We set $X_i=\{x_j:j\in J_i\}=X\cap G_i$ and  $n_i=|X_i|=|J_i|$.

Note that in practice we may prefer to first choose the sets $X_i$ and only thereafter find suitable enclosing sets $G_i$,  or
even just show the existence of $G_i$ with certain desired properties. For example, $G_i$ may be only needed in the proofs, whereas the generation of $X_i$ be part
of a computational algorithm. A possible approach to the latter when $G$ is a metric space is to choose  centers
$c_1,\ldots,c_m\in G$, and generate $X_i$ by selecting for each $i=1,\ldots,m$ a set of nearest neighbors  of  $c_i$ in
$X$. This can be done for example by collecting either all neighbors within certain distance from $c_i$ (range search), or a
prescribed number of its nearest neighbors (KNN search).

\begin{definition}[Overlap splines]\label{osp}
Given $X$, $\cG$ and $\cP$, any collection $s=\{p_i\}_{i=1}^m$ of \textit{patches} $p_i\in P_i$, $i=1,\ldots,m$, 
is said to be an \textit{overlap spline} if the \textit{connection condition} $p_i|_{X_i\cap X_j}=p_j|_{X_i\cap X_j}$ 
holds whenever $X_i\cap X_j\ne\emptyset$.
The linear space of all overlap splines is denoted $S(X,\cG,\cP)$. 
\end{definition}

Clearly, $\dim S(X,\cG,\cP)\le \sum_{i=1}^m\dim P_i$, and the equality holds if and only if $P_i|_{X_i\cap X_j}=\{0\}$ for all
pairs  $(i,j)$ such that $i\ne j$ and $X_i\cap X_j\ne\emptyset$. Moreover, it is easy to see that
\begin{equation}\label{lbdim}
\dim S(X,\cG,\cP)\ge |X|+\sum_{i=1}^m\dim P_i-\sum_{i=1}^m|X_i|.
\end{equation}
Indeed, we may represent each $s=\{p_i\}_{i=1}^m\in S(X,\cG,\cP)$ as a point $\RR^D$, where $D=\sum_{i=1}^m\dim P_i$, 
by expanding each $p_i$ in a basis of $P_i$, and this defines a linear mapping $S(X,\cG,\cP)\to \RR^D$. 
Each point $x_k\in X$ generates at most $m_k-1$ linearly independent linear equations for the images of $s$ in $\RR^D$ resulting from the conditions
$p_i(x_k)=p_j(x_k)$ for all $i\ne j$ such that $x_k\in X_i\cap X_j$, where $m_k:=|\{i:x_k\in X_i\}|$. 
These linear equations define the linear subspace of $\RR^D$ formed by the images of $s$, and
it follows that the dimension of this subspace is at least $D-\sum_{k=1}^N(m_k-1)$. 
Since $\sum_{k=1}^Nm_k=\sum_{i=1}^m|X_i|$, the lower bound \eqref{lbdim} follows.

We can say more about the dimension of $S(X,\cG,\cP)$ under additional assumptions. 
A set $Y\subset G$ is said to be \textit{total} for a finite-dimensional
vector space $P$ of functions on $G$  if for any $p\in P$ the condition $p|_Y=0$ implies $p=0$. This means that $p\in P$ is
completely determined by its values at $Y$. If in addition  $|Y|=\dim P$, then  $Y$ is an \textit{interpolation set (I-set)}
for  $P$, that is the interpolation problem that prescribes arbitrary values of $p$ at the points in $Y$ is uniquely solvable.
In other words, $Y$ is an I-set for  $P$ if and only if $|Y|=\dim P|_Y=\dim P$.

\begin{definition}[Interpolatory overlap splines]\label{iosp}
A space $S(X,\cG,\cP)$ of overlap splines is called \textit{interpolatory} if each $X_i$ is an I-set for $P_i$, 
$i=1,\ldots,m$.
\end{definition}

\begin{proposition}\label{dimIS}
Assume that $X_i$ is total for $P_i$, for all $i=1,\ldots,m$. Then $\dim S(X,\cG,\cP)\le|X|$.
For interpolatory overlap splines $\dim S(X,\cG,\cP)=|X|$.
\end{proposition}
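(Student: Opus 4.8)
The plan is to analyze the nodal evaluation map on $X$. I would define a linear map $\Phi\colon S(X,\cG,\cP)\to\RR^{|X|}$ by $\Phi(s)=\bigl(s(x_1),\ldots,s(x_N)\bigr)$, where for each $k$ we set $s(x_k):=p_i(x_k)$ with $i$ any index such that $x_k\in X_i$. The first thing to verify is that this is well defined. Since $X\subset G=\bigcup_{i=1}^m G_i$, every $x_k$ lies in some $G_i$, hence in $X_i=X\cap G_i$, so at least one admissible index exists; and if $x_k\in X_i\cap X_j$, then $X_i\cap X_j\neq\emptyset$, so the connection condition yields $p_i(x_k)=p_j(x_k)$, making the value independent of the chosen index. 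Linearity of $\Phi$ is then immediate.

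For the first assertion I would show that $\Phi$ is injective when each $X_i$ is unisolvent for $P_i$. If $\Phi(s)=0$ for $s=\{p_i\}_{i=1}^m$, then $p_i(x_k)=0$ for every $x_k\in X_i$, i.e.\ $p_i|_{X_i}=0$, so unisolvence forces $p_i=0$ for all $i$ and hence $s=0$. An injective linear map into $\RR^{|X|}$ gives $\dim S(X,\cG,\cP)\le|X|$.

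For the second assertion, note that an I-set is in particular unisolvent, so the bound $\dim S(X,\cG,\cP)\le|X|$ already applies; it remains to show $\Phi$ is surjective in the interpolatory case. Given $v=(v_1,\ldots,v_N)\in\RR^{|X|}$, use that each $X_i$ is an I-set for $P_i$ to choose the unique $p_i\in P_i$ with $p_i(x_k)=v_k$ for all $k\in J_i$. The collection $s=\{p_i\}_{i=1}^m$ satisfies the connection condition, since for any $x_k\in X_i\cap X_j$ both $p_i(x_k)$ and $p_j(x_k)$ equal $v_k$; thus $s\in S(X,\cG,\cP)$ and $\Phi(s)=v$. Hence $\Phi$ is an isomorphism and $\dim S(X,\cG,\cP)=|X|$.

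I do not anticipate a real obstacle: the only point requiring care is the well-definedness of $\Phi$, namely that the connection condition is precisely what makes the nodal values of an overlap spline unambiguous, and that the covering property $G=\bigcup_{i=1}^m G_i$ ensures each node is covered by at least one patch.
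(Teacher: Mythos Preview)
Your proof is correct and follows essentially the same approach as the paper: both define the nodal evaluation map (the paper calls it $T$), verify well-definedness via the connection condition, and obtain the upper bound from injectivity under unisolvence. The only minor difference is in the interpolatory case: you establish equality by proving surjectivity directly (constructing the interpolant on each patch), whereas the paper's formal proof invokes the previously derived lower bound \eqref{lbdim}, which collapses to $|X|$ since $|X_i|=\dim P_i$; the paper then gives your surjectivity argument immediately after the proof as an additional remark.
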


\begin{proof} Given $s\in S(X,\cG,\cP)$ we define $\hat u\in \RR^N$ by setting $\hat u_j=p_i(x_j)$ for any $i$ such that
$x_j\in X_i$. It follows from Definition~\ref{osp} that $p_i(x_j)$ does not depend on a particular choice of
$i$. The mapping $T:S(X,\cG,\cP)\to \RR^N$ given by $s\mapsto\hat u$ is linear.  If $Ts=0$,
then in particular $p_i|_{X_i}=\hat u|_{J_i}=0$ and hence $p_i=0$ for all $i$, which shows that $\ker T=\{0\}$. Hence $T$ is
an injection, and $\dim S(X,\cG,\cP)\le|X|$.
This in particular applies to the interpolatory overlap splines. Moreover, in this case $|X_i|=\dim P_i$, $i=1,\ldots,m$, and
hence \eqref{lbdim}  gives the opposite inequality $\dim S(X,\cG,\cP)\ge|X|$.
\end{proof}

The mapping $T:S(X,\cG,\cP)\to \RR^N$ used in the above proof is well defined for any overlap splines. We will use
the notation $s|_X:=Ts$, and call $s|_X$ the \textit{restriction} of $s$ to $X$. Thus, any $s\in S(X,\cG,\cP)$ gives rise to a
discretized function $s|_X$ defined only on $X$. Under the hypotheses of Proposition~\ref{dimIS} we have $\ker T=\{0\}$, and hence 
each $s\in S(X,\cG,\cP)$ is uniquely determined by $s|_X$.
 
We may always split the task of computing the
dimension of $S(X,\cG,\cP)$ into the dimensions of the kernel and the image of $T$,
\begin{equation}\label{kerimT}
\dim S(X,\cG,\cP)=\dim \ker T+\dim\im T.
\end{equation}

For the spaces of interpolatory overlap splines the mapping $T$ is a bijection. Indeed, we have shown in the proof of Proposition~\ref{dimIS} that 
$\ker T=\{0\}$ in this case.  To show the
surjectivity of $T$, consider any $\hat u\in \RR^N$, and for each $i=1,\ldots,m$ choose $p_i$ to be the unique element
of $P_i$ such that $p_i(x_j)=\hat u_j$ for all $j\in J_i$. Then $s=\{p_i\}_{i=1}^m$ satisfies $Ts=\hat u$. This means that
the elements $s\in S(X,\cG,\cP)$ can be parameterized by their `values' $s(x_j)$, $j=1,\ldots,N$, uniquely defined by 
evaluating at $x_j$ any one of the suitable patches $p_i$ such that $x_j\in X_i$.

Any space $S(X,\cG,\cP)$ of interpolatory overlap splines possesses a \textit{Lagrange basis} $\ell_1,\ldots,\ell_N$, with
$\ell_j=\{\ell_{ji}\}_{i=1}^m$, where each $\ell_{ji}\in P_i$ is uniquely determined by the interpolation conditions
$\ell_{ji}(x_k)=\delta_{jk}$, $k\in X_i$, and $\delta_{jk}$ is the Kronecker delta. These basis splines are  \textit{local} in
the sense that $\ell_{ji}=0$ as soon as $x_j\notin X_i$.

We now discuss several examples of overlap splines. In all  special cases considered in this paper $G$ is a closed set in a
smooth manifold, mostly $G\subset \RR^d$. 
However we prefer not to restrict the definitions to this situation, because less usual sets $G$ may quite naturally be of
interest, e.g.\ graphs.

\subsubsection*{Univariate Generalized Splines and Discrete Splines}

Let $G$ be an interval $[a,b]\subset\RR$. Consider a partition $\Delta=\{a=t_0<t_1<\cdots<t_m=b\}$ of $[a,b]$, and choose
sets  $G_i\supset[t_{i-1},t_i]$, and spaces $P_i$, $i=1,\ldots,m$. Given a set $X=\{x_j\}_{j=1}^N\subset [a,b]$, any overlap
spline $s=\{p_i\}_{i=1}^m\in S(X,\cG,\cP)$ may be used to construct a \textit{generalized spline} $\tilde s:[a,b]\to\RR$ in
the sense of  \cite[Chapter 11]{SchumSplines07}, where $\tilde s(x):=p_i(x)$, with $i$ satisfying $x\in [t_{i-1},t_i]$ and
$x<t_i$ when $i<m$. The linear functionals of the set $\Gamma_{ij}$ of \cite[Definition 11.1]{SchumSplines07} are  the
point evaluation functionals for all points in $X_i\cap X_j$. 

The spline $\tilde s$ is a continuous function on $[a,b]$ if
each $p_i$ is continuous on $[t_{i-1},t_i]$ and  $t_i\in X$ for all $i=1,\ldots,m-1$. 
If $X_i\subset [t_{i-1},t_i]$, $i=1,\ldots,m$, then the generalized spline $\tilde s$ may be
directly recognized as an overlap spline in $S(X,\tilde\cG,\cP)$, with $\tilde\cG=\{[t_{i-1},t_i]:i=1,\ldots,m\}$.
In particular, we obtain the classical spaces of continuous piecewise polynomials (finite elements) of degree $q$ if
$X_i$ is a set of $q+1$ points in $[t_{i-1},t_i]$, including the endpoints $t_{i-1},t_i$, and $P_i$ is the linear space
$\Pi^1_q$ of univariate polynomials of degree at most $q$.

However, an overlap spline does not require a partition. According to Definition~\ref{osp} we only need to choose  points
$x_j\in [a,b]$,  sets $G_i\subset[a,b]$ and  spaces $P_i$. As an example, we take 
\begin{align}\label{ex1d}
\begin{split}
x_j&=a+hj,\quad j=0,\ldots,N,\text{ with }h=(b-a)/N,\\
G_i&=[x_{i-1},x_{i+1}],\quad i=1,\ldots,N-1, \text{ and}\\
P_i&=\Pi^1_2,\text{ the space of quadratic polynomials}.
\end{split}
\end{align}

Since $X_i=X\cap G_i=\{x_{i-1},x_i,x_{i+1}\}$, $i=1,\ldots,N-1$, are I-sets for $\Pi^1_2$,
the space $S(X,\cG,\cP)$ is interpolatory. The Lagrange basis $\ell_0,\ldots,\ell_N$ for it, with
$\ell_j=\{\ell_{ji}\}_{i=1}^{N-1}$, is given by Lagrange polynomials
\begin{equation}\label{ex1dL}
\ell_{ji}(x)=\frac{1}{h^2}
\begin{cases}
\frac12(x-x_{j-2})(x-x_{j-1}),& i=j-1,\\
-(x-x_{j-1})(x-x_{j+1}),& i=j,\\
\frac12(x-x_{j+1})(x-x_{j+2}),& i=j+1,\\
0,& \text{otherwise.}
\end{cases}
\end{equation}

If we now choose the following partition, in order to remove the overlaps, 
$\Delta=\{a=t_0<t_1<\cdots<t_{N-1}=b-h\}$ with simply $t_j=x_j$ for all $j$, and define, for each overlap spline
$s=\{p_i\}_{i=1}^{N-1}\in S(X,\cG,\cP)$ the continuous function $\tilde s$ on $[a,b-h]$ given by
$$
\tilde s|_{[t_{i-1},t_i]}=p_i,\quad i=1,\ldots,N-1,$$
then we arrive at a special case of  \textit{discrete splines}, as defined for example in \cite[Section 8.5]{SchumSplines07}.
Note that the discrepancy that the discrete splines are defined on the interval $[a,b-h]$ instead of $G=[a,b]$ is technical and
is related to the fact that discrete splines use forward differences in the definition of the connection conditions, which
causes some asymmetry.

Clearly, various results known for discrete splines, such as dimension formulas, local bases (discrete B-splines) or
quasi-interpolation methods, see \cite{SchumSplines07} and references therein, 
can be carried over to appropriate more general types of univariate overlap splines.

\subsubsection*{Multivariate Polynomial Overlap Splines}

Let $G\subset\RR^d$, $d\ge 2$, $X=\{x_j\}_{j=1}^N\subset G$ and, for some choice of $\cG=\{G_i\}_{i=1}^m$, $G_i\subset G$,
 let $\cP_q:=\{\Pi^d_q|_{G_i}\}_{i=1}^m$ for some $q\ge1$, where $\Pi^d_q$ denotes the space of
$d$-variate polynomials of total degree less or equal $q$. Then
$S(X,\cG,\cP_q)$ is a space of multivariate polynomial overlap splines.
To generate interpolatory overlap splines, $X_i$ must satisfy $|X_i|=\dim \Pi^d_q|_{X_i}=\dim \Pi^d_q|_{G_i}$. 

Given a finite set $X\subset\RR^d$, suppose that $\Delta=\{T_1,\ldots,T_m\}$ is a \textit{triangulation} of $X$, where each $T_i\in\Delta$
is a (closed) non-degenerate simplex with vertices in $X$, and $T_i\cap T_j$ is a common face of $T_i,T_j$ as soon as
$i\ne j$. 
One way to define overlap splines is by choosing $G=\cup_{i=1}^m T_m$ and
$\cG=\{T_i:i=1,\ldots,m\}$. Then each $X_i=X\cap T_i$ consists  of $d+1=\dim \Pi^d_1=\dim \Pi^d_1|_{T_i}$
vertices  of $T_i$, and  we obtain interpolatory overlap splines  $s=\{p_i\}_{i=1}^m\in S(X,\cG,\cP_1)$
that may be realized as continuous piecewise linear splines  $\tilde s$ %
by setting $\tilde s|_{T_i}=p_i$, $i=1,\ldots,m$. 
Thus, in this way we obtain the classical linear Courant elements. Instead of setting $G_i=T_i$, we may choose $G_i$ to be 
some simply connected sets enclosing $T_i$, such that $G_i\cap X=X_i$ still holds, leading to essentially
the same space of overlap splines, without making use of a partition of $G$.

Starting with the same $X$, but choosing different subsets $G_i$, we obtain completely different spaces of overlap splines.
As an illustration, suppose in the bivariate case that a triangulation $\Delta'=\{T'_1,\ldots,T'_m\}$ is chosen such that
the set of middle points of edges of all triangles of $\Delta'$  coincides with $X$. Then it is easy to see that
by choosing $G'_i$ as the triangles $T'_i$ or some sets $G'_i\subset\RR^2$ enclosing $T'_i$, 
such that $G'_i\cap X=T'_i\cap X$, $i=1,\ldots,m$,  
we obtain a space of overlap splines $S(X,\cG',\cP_1)$ that can be interpreted as the  discontinuous  
Crouzeix-Raviart finite elements, see e.g.\ \cite{BrennerScott08}.

If we start with a triangulation $\Delta=\{T_1,\ldots,T_m\}$, and define $X$ as its set of \textit{domain points} of degree
$q$, that is the union of the domain points for all simplices $T_i$ (see the definitions in \cite{LaiSchumaker07} for $d=2,3$,
and for example in \cite{AAD11} in general),
then by choosing $X_i=X\cap T_i$ and some enclosing sets $G_i\supset T_i$, with $G_i\cap X= X_i$, we get a space of
interpolatory overlap splines that may be identified with continuous piecewise polynomials of degree $q$ on the 
triangulation $\Delta$  %
in the same way as described before for $q=1$.

Thus, we obtained the classical spline spaces $S^0_q(\Delta)$ on triangulations, where 
$$
S^r_q(\Delta):=\big\{f\in C^r(\cup_{i=1}^mT_i):f|_{T_i}\in \Pi^d_q\big\}.$$ 
In order to interpret $S^r_q(\Delta)$ with $r\ge1$ as overlap splines we would need to generalize 
the patch connection conditions $p_i|_{X_i\cap X_j}=p_j|_{X_i\cap X_j}$ of Definition~\ref{osp} by either requiring
that the partial derivatives of $p_i$ and $p_j$ up to order $r$ coincide at $X_i\cap X_j$, or that the differences
$p_i-p_j$ belong to appropriate polynomial ideals.

In general no triangulation is needed; in order to define overlap splines we just choose $X\subset G$ and overlapping sets
$G_i$. Let us assume that $G$ is the closure of a domain $\Omega\subset\RR^d$ (that is, $\Omega$ an open and connected set),
choose a collection of domains $\Omega_i\subset\RR^d$, $i=1,\ldots,m$, that comprise an open cover of $G$, 
and set $G_i=\Omega_i\cap G$.  For any $X=\{x_j\}_{j=1}^N\subset G$ and any $q\in\NN$ we obtain a space of overlap splines
$S(X,\cG,\cP_q)$. 
We may also prefer to place some further restrictions on the sets $X_i$, for example requiring
that $\Omega_i$ are simply connected or convex.

Similar to the case of classical spline spaces (see \cite[Section 9.8]{LaiSchumaker07}),
 we may speak of a generic position of 
the set $X$.

\begin{definition}[Generic position of $X$]\label{generic} 
We say that $X=\{x_j\}_{j=1}^N$ is in a \textit{generic position} with respect to $\cG$
and $q$ if 
$$
\dim S(X,\cG,\cP_q)\le \dim S(\tilde X,\cG,\cP_q)$$
for all sets $\tilde X\subset G$ obtained by a sufficiently small perturbation
of the coordinates of the points $x_j$, $j=1,\ldots,N$.
\end{definition}

The following statement gives a justification for this definition, as it shows that small perturbations
of $X$ do not change the dimension of $S(X,\cG,\cP_q)$ if $X$ is in a generic position.

\begin{proposition}\label{genineq}
Given any $X,\cG,q$, we have $\dim S(X,\cG,\cP_q)\ge \dim S(\tilde X,\cG,\cP_q)$ for all sets
$\tilde X=\{\tilde x_j\}_{j=1}^N\subset G$ obtained by a sufficiently small perturbation
of the coordinates of the points $x_j$, $j=1,\ldots,N$.
\end{proposition}

\begin{proof}
Assume that the perturbation is so small that $\tilde J_i=J_i$, $i=1,\ldots,m$, where $J_i,\tilde J_i$ are index sets such that
$X_i:=X\cap G_i=\{x_j:j\in J_i\}$ and $\tilde X_i:=\tilde X\cap G_i=\{\tilde x_j:j\in \tilde J_i\}$. Then 
any $s=\{p_i\}_{i=1}^m\in S(\tilde X,\cG,\cP_q)$ may be uniquely represented by a sequence $c$ of $m{q+d\choose d}$ coefficients of $m$
polynomials $p_i\in\Pi^d_q$ satisfying the equation $A(\tilde X)c=0$, where $A(\tilde X)$ is a matrix whose rows are generated
from all connection conditions $p_i|_{\tilde X_i\cap \tilde X_j}=p_j|_{\tilde X_i\cap \tilde X_j}$ expressed in terms of the
coefficients in $c$. Hence
$$
\dim S(\tilde X,\cG,\cP_q)=\dim\ker A(\tilde X)  = m{q+d\choose d}-\rank A(\tilde X).$$
Arguing as in the proof of \cite[Theorem 9.32]{LaiSchumaker07}, we choose a nonsingular square submatrix $B$ of $A(X)$ 
of size $r=\rank A(X)$, and consider the submatrix $B(\tilde X)$ of $A(\tilde X)$ corresponding to the same rows and columns.
For a sufficiently small perturbation $\det B(\tilde X)\ne 0$ since $\det B(\tilde X)$ is a continuous function of
$\tilde X$ and $\det B(X)=\det B\ne 0$. Hence $\rank A(\tilde X)\ge r$, and the claim follows. 
\end{proof}

\runinhead{Open Problem (Dimension of the spaces of polynomial overlap splines)} 
In the above setting, determine the dimension of $S(X,\cG,\cP_q)$, in particular for the case
when  $X$ is in a generic position.
Characterize overlap spline spaces $S(X,\cG,\cP_q)$ for which the equality holds 
in \eqref{lbdim}, that is
\begin{equation}\label{lbdimeq}
\dim S(X,\cG,\cP_q)=|X| + m{q+d\choose d}-\sum_{i=1}^m|X_i|.
\end{equation}

\smallskip

Note that  \eqref{lbdimeq} holds in particular for interpolatory overlap splines.
Clearly, $S(X,\cG,\cP_q)$ is interpolatory, with $\dim S(X,\cG,\cP_q)=|X|$, if and only if each $X_i$ is an I-set for 
$\Pi^d_q$, as in the above examples on triangulations. 
This means that $|X_i|=\dim \Pi^d_q={q+d\choose d}$ and $X_i$ does not lie on any algebraic hypersurface of 
degree $q$. 

If we only assume that $|X_i|=\dim \Pi^d_q={q+d\choose d}$ for all $i=1,\ldots,m$, then all $X_i$ may be turned into
I-sets for $\Pi^d_q$ by a single arbitrarily small perturbation of $X=\{x_j\}_{j=1}^N$. Indeed, given such an $X$, 
consider the set $\fX$  consisting of all $N$-tuples 
$(\tilde x_1,\ldots,\tilde x_N)\in G^N$ such that 
$$
\text{
(a)\; $\tilde x_j\ne \tilde x_k$ whenever $j\ne k$, and\quad
(b)\; $\tilde x_j\in G_k$ if and only if $j\in J_k$,
}$$
 with $J_k$ defined as in the proof of Proposition~\ref{genineq}. 
For each $i=1,\ldots,m$ denote by $\fX_i$ the subset of  $\fX$ that consists of all 
$(\tilde x_1,\ldots,\tilde x_N)\in \fX$ such that
$$
\text{$\{\tilde x_j:j\in J_i\}$ is an I-set for $\Pi^d_q$.}$$
Then each $\fX_i$ is an open and dense subset of $\fX$, and hence $\tilde\fX:=\bigcap_{i=1}^m\fX_i$ is also an open dense
subset of $\fX$. Since $(x_1,\ldots,x_N)\in \fX$, by an arbitrary small perturbation of the coordinates of the points $x_j$,
$j=1,\ldots,N$, we may obtain a set $\tilde X=\{\tilde x_j\}_{j=1}^N$ such that 
$(\tilde x_1,\ldots,\tilde x_N)\in \tilde \fX$ and hence each $\tilde X_i=\tilde X\cap G_i$ is an I-set for $\Pi^d_q$,
and thus $S(\tilde X,\cG,\cP_q)$ is interpolatory with $\dim S(\tilde X,\cG,\cP_q)=|\tilde X|=|X|$.
This implies the following statement.

\begin{proposition}\label{geninterp} Assume that $|X_i|=\dim \Pi^d_q$ for all $i=1,\ldots,m$. Then $X$ is in a generic
position if and only if \eqref{lbdimeq} holds, that is  $\dim S(X,\cG,\cP_q)=|X|$. In particular, $X$ is in a generic
position if $S(X,\cG,\cP_q)$ is interpolatory.
\end{proposition}

\begin{proof} 
If $\dim S(X,\cG,\cP_q)=|X|$, then $X$ is in a generic position because the inequality 
$\dim S(\tilde X,\cG,\cP_q)\ge |\tilde X|=|X|$ for
sufficiently small perturbations follows from \eqref{lbdim}. Conversely, if $X$ is in a generic position and  
$\dim S(X,\cG,\cP_q)>|X|$, then any sufficiently small perturbation $\tilde X$ must satisfy 
$\dim S(\tilde X,\cG,\cP_q)>|X|=|\tilde X|$, which contradicts the above argument that we may always find $\tilde X$ 
with interpolatory  $S(\tilde X,\cG,\cP_q)$.
\end{proof}

Note that the converse of the last claim of Proposition~\ref{geninterp} is not true. For example, let $d=2$, $q=1$, $m=1$, 
let $G$ be any open set, and let $X$ consist of three collinear points in $G_1=G$. Then $S(X,\cG,\cP_1)$ is not interpolatory,
but it is in a generic position since $\dim S(X,\cG,\cP_1)=3=|X|$.

It follows from \cite[Theorem 8.50]{SchumSplines07} that \eqref{lbdimeq} holds for all spaces of
univariate discrete splines of \cite[Section 8.5]{SchumSplines07} interpreted as overlap splines in the same way as done
in the example defined by \eqref{ex1d}.

However, it is easy to generate an example where \eqref{lbdimeq} fails. Let $d=2$, $q=1$, $m=2$, 
let $\Omega_1$ and $\Omega_2$ be two connected open sets with nonempty intersection, and $\Omega=\Omega_1\cup \Omega_2$. 
We choose
$X$ to be a set of $N$ points in $\Omega_1\cap \Omega_2$. Then $X_1=X_2=X$ and the right hand side of \eqref{lbdimeq}
is $6-N$, which is even negative for $N>6$. It is easy to see that the dimension of $S(X,\cG,\cP_1)$ is three if the points in $X$ 
are non-collinear, four if they are collinear and $N\ge2$, and
five if $N=1$. Thus, \eqref{lbdimeq} is only correct if either $N\le2$, or $N=3$ and the points are non-collinear. 
Note that $X$ is in a generic position unless $N\ge3$ and the points are collinear, since otherwise small perturbations of $X$ do not
change the dimension of $S(X,\cG,\cP_1)$.

\subsubsection*{Kernel-Based Overlap Splines}

Recall that a function $K:G\times G\to R$ is said to be a \textit{positive definite kernel} if the matrix 
$[K(x_i,x_j)]_{i,j=1}^n$ is positive definite for any finite set $X=\{x_j\}_{j=1}^n\subset G$. Then $X$ is an I-set
for the space 
$$
P_{K,X}:=\spann\{K(\cdot,x_j):j=1,\ldots,n\}.$$ 
Therefore, an interpolatory overlap spline space 
$S(X,\cG,\cP)$ is obtained for any $X\subset G$ and $\cG=\{G_j\}_{j=1}^m$ with usual assumptions
if we choose $\cP=\cP_K:=\{P_{K,X\cap G_i}\}_{i=1}^m$.

A kernel $K$ is \textit{conditionally positive definite} with respect to a finite-dimensional space $Q$ of real-valued
functions on $G$ if for any finite set $X=\{x_j\}_{j=1}^n\subset G$ the quadratic form $\sum_{i,j=1}^n c_ic_jK(x_i,x_j)$ is
positive for all $c\in\RR^n\setminus\{0\}$ such that $\sum_{j=1}^n c_jp(x_j)=0$ for all $p\in Q$. If $X$ is a total set
for $Q$, then $X$ is an I-set for the space 
$$
P_{K,X,Q}:=\Big\{\sum_{j=1}^n c_j K(\cdot,x_j)+\tilde p:\tilde p\in Q\text{ and } 
c\in\RR^n\text{ with }\sum_{j=1}^n c_jp(x_j)=0\;\forall p\in Q\Big\},$$
see for example \cite{Wendland}. Note that $P_{K,X,\{0\}}=P_{K,X}$. For any finite $X\subset G$ and $\cG=\{G_j\}_{j=1}^m$ we obtain an interpolatory overlap spline 
space $S(X,\cG,\cP_{K,Q})$, with $\cP_{K,Q}:=\{P_{K,X\cap G_i,Q}\}_{i=1}^m$, as soon as each $X_i=X\cap G_i$ is total
for $Q$. If $G\subset\RR^d$ and $Q=\Pi^d_{q-1}$ for some $q\ge1$, then $K$ is said to be \textit{conditionally positive
definite of order $q$}, and we use the notation $P_{K,X,q}:=P_{K,X,Q}$ and  $\cP_{K,q}:=\cP_{K,Q}$. We also set 
 $P_{K,X,0}:=P_{K,X}$ and  $\cP_{K,0}:=\cP_{K}$ to include the positive definite case in the common notation.

We refer to \cite{Buhmann03,Fasshauer07,Wendland} for the theory of (conditionally) positive definite kernels. In particular,
for $G\subset\RR^d$  several families of kernels are known in the form of a \textit{radial basis function (RBF)}
$K(x,y)=\varphi(\|x-y\|_2)$. A typical example of a positive definite RBF is the \textit{Gauss kernel}
given by $\varphi(r)=e^{-(\eps r)^2}$, where $\eps>0$ is the \textit{shape parameter}. The  \textit{polyharmonic RBF} given
by  $\varphi(r)=r^\alpha$ for any $\alpha\in (0,+\infty)\setminus 2\NN$ is conditionally positive
definite of order $q\ge \lfloor\alpha/2\rfloor +1$.

Note that we can also choose different kernels $K_i$ and different spaces $Q_i$ for each $i=1,\ldots,m$. In particular,
$K_i$ may be the Gauss kernel with different values of the shape parameter.

\section{Approximation of Functions}%
\label{pum}

An overlap spline $s=\{p_i\}_{i=1}^m$, obtained by some computation as an approximation of 
a function $f:G\to\RR$, may be used directly to estimate function values $f(x)$ or other functionals such as partial
derivatives of $f$ at $x\in G$,
by evaluating at $x$ suitable patches $p_i$
such that $x\in G_i$. Depending on the application it may be sufficient to just pick one such $i$ and use $p_i(x)$, or we may need to build a
(weighted) average of all  values $p_i(x)$ with $x\in G_i$, or produce an approximating function $\tilde s:G\to\RR$ in a different way.
Moreover, the restriction $s|_X$ may already serve as a discrete approximation of $f$.

In special circumstances, like univariate generalized splines or continuous multivariate polynomial splines on triangulations,
the values $p_i(x)$ may be the same for all $i$ with $x\in G_i$, at least for a special choice of $G_i$, and we
obtain a well-defined function $\tilde s$ by using these values. However, this is a rare exception if we look for meshless
methods, because it typically means that 
$\cG=\{G_i\}_{i=1}^m$ is a partition of $G$.

We always have at our disposal the \textit{Partition of Unity Method (PUM)} that generates $\tilde s(x)$ as a weighted average of
$p_i(x)$ as follows. Let $\Gamma=\{\gamma_i\}_{i=1}^m$ be a \textit{partition of unity} associated with $\cG$, such that
$$
\gamma_i: G\to\RR,\quad \gamma_i(x)\ge 0,\; x\in G_i,\quad \gamma_i(x)= 0,\; x\in G\setminus G_i,\quad i=1,\ldots,m,$$
and
$$
\sum_{i=1}^m \gamma_i(x)=1,\quad x\in G.$$
For each overlap spline $s=\{p_i\}_{i=1}^N\in S(X,\cG,\cP)$ we define a function
$$
s_\Gamma(x)=\sum_{\substack{i=1\\ x\in G_i}}^m \gamma_i(x) p_i(x),\quad x\in G,$$
that can take the role of $\tilde s$. If $S(X,\cG,\cP)$ is interpolatory, then $s_\Gamma|_X=s|_X$.

Note that the patch connections $p_i|_{X_i\cap X_j}=p_j|_{X_i\cap X_j}$ of Definition~\ref{osp} are not needed in order that
$s_\Gamma$ can serve as an approximation of a given function $f:G\to\RR$. It suffices to find $p_i\in P_i$ that approximate $f$ well
on $G_i$, see e.g.\ the approximation theory of PUM in \cite{Wendland}. For example, for the approximation of
functions from scattered data $f(x_i)$, $i=1,\ldots,N$, with $x_i$ in a smooth manifold $G$, we may use a kernel-based 
interpolatory overlap spline 
$s\in S(X,\cG,\cP)$ and generate $s_\Gamma$ with desired degree of smoothness by using appropriately smooth partition of unity
functions $\gamma_i$. However, if we build a \textit{disconnected overlap spline} 
$s\in S(\emptyset,\cG,\cP)$, with local approximations $p_i\in P_i$ of the data
$f|_{X\cap G_i}$ obtained by any suitable method, then $s_\Gamma$ will also inherit the approximation quality of the patches.

The above \textit{direct approximation} setting, where the input data is given at $x_i\in X$, should be contrasted with 
the \textit{solution of the operator equations}, which we will consider in the next section.
In this case patch connections are crucial, 
and the discretized solutions $\hat u =s|_X$ may in fact be sufficient as the output of the algorithm,
without creating a solution $\tilde s$ defined on $G$, or one may leave the latter to a post-processing step with its own
algorithms that may in turn rely on PUM or other approaches to scattered data fitting, such as  kernel-based and moving least squares
methods or spline fitting.

\section{Collocation, Least Squares and Finite Difference Methods}
\label{col}

We now consider a linear operator equation: Find $u:G\to\RR$, such that 
\begin{equation}\label{opeq}
Lu(x)=f(x),\qquad \forall x\in G,
\end{equation}
where $f$ is a given function $f:G\to\RR$, and $Lu:G\to\RR$ is well defined for all $u$ in a vector space $U$ of functions
defined on $G$. We assume that the operator $L$ is linear and that \eqref{opeq} has a unique solution $u\in U$ for all 
$f\in F$, where $F$ is some other vector space of functions on $G$.  

For each $x\in G$, a subset $G_x\subset G$ such that $x\in G_x$ is said
to be a \textit{determining set of $L$ at $x$} if $Lu(x)=Lv(x)$ whenever $u,v\in U$ satisfy $u|_{G_x}=v|_{G_x}$. 
An overlap spline space $S(X,\cG,\cP)$ is \textit{compatible} with $L$ if $P_i\subset U|_{G_i}$, $i=1,\ldots,m$, and
every $G_i\in\cG$ is a determining set of $L$ at each $x\in G_i$. Then $Lp(x)$ is well defined for all 
$x\in G_i$, $p\in P_i$, and hence $Ls=\{Lp_i\}_{i=1}^m\in S(X,\cG,L\cP)$ is well defined for each 
$s=\{p_i\}_{i=1}^m\in S(X,\cG,\cP)$, where  $L\cP:=\{LP_i\}_{i=1}^m$, with $LP_i:=\{Lp: p\in P_i\}$.

The \textit{Method of Collocation} determines an approximate solution  of \eqref{opeq} as an overlap spline $s$ as follows.
Choose a space $S(X,\cG,\cP)$ of overlap splines compatible with $L$, a set of \textit{collocation nodes}
$Y=\{y_1,\ldots,y_D\}\subset G$, with $D=\dim S(X,\cG,\cP)$, and a mapping $\sigma:\{1,\ldots,D\}\to\{1,\ldots,m\}$,
such that  $y_j\in G_{\sigma(j)}$. Determine $s=\{p_i\}_{i=1}^m\in S(X,\cG,\cP)$  by requiring that
\begin{equation}\label{coleq}
Lp_{\sigma(j)}(y_j)=f(y_j),\qquad j=1,\ldots,D.
\end{equation}
If we expand the solution $s$ of \eqref{coleq} in a basis $\{s_1,\ldots,s_D\}$ of $S(X,\cG,\cP)$, 
\begin{equation}\label{ck}
s=\sum_{k=1}^Dc_ks_k,\qquad c_k\in\RR,
\end{equation}
where 
$$
s_k=\{p_{ki}\}_{i=1}^m\in S(X,\cG,\cP), \quad k=1,\ldots,D,$$
then the unknown coefficients $c_k$ of $s$ satisfy the square system of linear equations
\begin{equation}\label{colsys}
\sum_{k=1}^Dc_kLp_{k,\sigma(j)}(y_j)=f(y_j),\qquad j=1,\ldots,D.
\end{equation}
Although in general there is no guarantee that this linear system is non-singular or solvable, we will soon discuss 
some situations where this has been shown.

The \textit{Method of Discrete Least Squares} differs from the method of collocation in that the number of collocation nodes
exceeds the dimension of the space of overlap splines. Moreover, we allow multiple collocation nodes if different patches 
$p_{\sigma(j)}$ are chosen for the same node $y_j$. The exact fulfillment of the  equations \eqref{coleq} is 
replaced by the least squares minimization of the residual. 
More precisely, we choose a sequence $Y=(y_1,\ldots,y_M)$ of nodes in $G$, with $M>D=\dim S(X,\cG,\cP)$, 
where $y_k=y_j$ is allowed for $k\ne j$, and a mapping $\sigma:\{1,\ldots,M\}\to\{1,\ldots,m\}$,
such that  $y_j\in G_{\sigma(j)}$, and $\sigma(j)\ne\sigma(k)$ whenever $y_j=y_k$ with $j\ne k$.
Determine $s=\{p_i\}_{i=1}^m\in S(X,\cG,\cP)$  by solving the minimization problem
\begin{equation}\label{lseq}
\min\Big\{\sum_{j=1}^M|Lp_{\sigma(j)}(y_j)-f(y_j)|^2:\{p_i\}_{i=1}^m\in S(X,\cG,\cP)\Big\}.
\end{equation}
The algebraic formulation is 
\begin{equation}\label{lseqa}
\min\Big\{\|Ac-b\|_2:c\in\RR^D\Big\},
\end{equation}
where 
$$
A=[Lp_{k,\sigma(j)}(y_j)]_{j=1,k=1}^{M,D},\quad b=[f(y_j)]_{j=1}^{M},$$
and $c=[c_k]_{k=1}^{D}$ is the vector of the coefficients of $s$ in \eqref{ck}.
The least squares problem is always solvable. The solution is unique if and only if the system matrix $A$ is of full rank.

As alternative to the formulation \eqref{lseq}, also other approaches to the approximate solution of the overdetermined 
linear system
\begin{equation}\label{overdet}
Lp_{\sigma(j)}(y_j)=f(y_j),\quad j=1,\ldots,M,
\end{equation}
are possible, for example, weighted least squares.

\medskip

We now discuss several examples emphasizing connections to the Finite Difference Method and its meshless generalizations. 

\subsubsection*{A Boundary Value Problem on an Interval}

As a first example we consider the boundary value problem
\begin{equation}\label{ex1dBVP}
u''(x)=f(x),\quad x\in (a,b),\qquad u(a)=u(b)=0,
\end{equation}
with unknown $u\in U= C^2[a,b]$. Note that the operator $L$ of \eqref{opeq} takes the form
$$
Lu(x)=\begin{cases} 
u''(x),& x\in (a,b),\\
\hspace{6pt} u(x), & x\in \{a,b\},
\end{cases}$$
and the right hand side is extended to the endpoints of $[a,b]$ by setting $f(a)=f(b)=0$.

Let us apply the method of collocation using the overlap spline space $S(X,\cG,\cP)$ defined by \eqref{ex1d}. It is easy to
see that this space is compatible with the operator $L$. The dimension of $S(X,\cG,\cP)$ is
$D=N+1$, and we may look for the overlap spline solution $s=\{p_i\}_{i=1}^{N-1}$ in the form 
$$
s=\sum_{k=0}^{N}c_k\ell_k,\qquad c_k\in\RR,$$
using the Lagrange basis \eqref{ex1dL}. As discussed in Section~\ref{os}, $c_k$ can be interpreted as the values $s(x_k)$
of the overlap spline $s$ because the values of all patches $p_i(x_k)$ such that $x_k\in G_i$ coincide. We choose the
collocation nodes $y_j=x_j$, $j=0,\ldots,N$, and corresponding patches $p_{\sigma(j)}$ according to the rule
$$
\sigma(j)=j\text{ for }j=1,\ldots,N-1,\quad \sigma(0)=1,\quad \sigma(N)=N-1.$$
The entries $L\ell_{k,\sigma(j)}(x_j)$ of the system matrix of \eqref{colsys} are easy to compute using the explicit formulas 
\eqref{ex1dL}, and we arrive at the linear system
\begin{align}\label{ex1dBVPsys}
\begin{split}
\frac{c_{k-1}-2c_{k}+c_{k+1}}{h^2} &= f(x_k),\quad k=1,\ldots,N-1,\\
c_0 &=0,\quad c_N=0.
\end{split}
\end{align}
This is in fact the same non-singular linear system that determines the classical finite difference solution of the boundary value problem
\eqref{ex1dBVP}, where $c_k$ are approximations of $u(x_k)$. In particular, the standard theory of finite difference methods
applies that provide error bounds for $c_k-u(x_k)$, and thus for $s(x_k)-u(x_k)$, $k=1,\ldots,N-1$.

A second look at the above reveals that the appearance of the second order finite difference 
$$
\frac{s(x_{k-1})-2s(x_{k})+s(x_{k+1})}{h^2},$$
that can be interpreted as the second derivative of the discrete function $s|_X$ at $x_k$, is not accidental because
the finite difference formula
$$
p''(x_k)=\frac{p(x_{k-1})-2p(x_{k})+p(x_{k+1})}{h^2},\quad p\in\Pi^1_2,$$
is just a method to compute $p''_k(x_k)$ using $p_k|_{X_k}$ when we set up the collocation system \eqref{coleq}
using the Lagrange basis of $S(X,\cG,\cP)$. Therefore, applying the method of collocation with interpolatory overlap splines 
in terms of their Lagrange basis may always be interpreted as a finite difference method.

Note that collocation with cubic discrete splines has been used in \cite{ChenWong14} to solve second order boundary 
value problems.

\subsubsection*{Dirichlet Problem for the Poisson Equation on the Square}

Consider the boundary value problem
\begin{equation}\label{exPoisson}
\Delta u(x)=f(x),\quad x\in \Omega=(0,1)^2,\qquad u|_{\partial\Omega}=0,
\end{equation}
where  $u\in U= C^2([0,1]^2)$. We define overlap splines by choosing $G=[0,1]^2$,
\begin{align}\label{ex2d}
\begin{split}
X&=\{x_{ij}\}_{i,j=0}^N,\;\text{ with } x_{ij}=(ih,jh)\in [0,1]^2,\; h=1/N,\\
\cG&=\{G_{ij}\}_{i,j=1}^{N-1},\;\text{ with }G_{ij} =\{x\in [0,1]^2:\|x-x_{ij}\|_2<h+\eps\},
\end{split}
\end{align}
where $0<\eps<(\sqrt{2}-1)h$. Then $X_{ij}=G_{ij}\cap X=\{x_{ij},x_{i-1,j},x_{i+1,j},x_{i,j-1},x_{i,j+1}\}$
is the 5-star of the node $x_{ij}$ as used in the standard finite difference discretization of \eqref{exPoisson}.

If we comprise $\cP$ of the polynomial spaces $P_{ij}=\Pi^2_2$ then the resulting space 
$S(X,\cG,\cP_2)$ is not interpolatory. Its dimension is nevertheless easy to determine by using
\eqref{kerimT}. Indeed, since $\dim \Pi^2_2|_{X_{ij}}=5=|X_{ij}|$, we conclude that $\dim\im T=|X|=(N+1)^2$. Further, 
$\ker T$ is generated by the overlap splines $s=\{p_{ij}\}_{i,j=1}^{N-1}$, with $p_{k\ell}(x_1,x_2)=(x_1-kh)(x_2-\ell h)$
for a single pair $(k,\ell)$, and $p_{ij}=0$ whenever $(i,j)\ne (k,\ell)$. There are $(N-1)^2$ overlap splines of this type,
hence $\dim\ker T=(N-1)^2$ and $D=\dim S(X,\cG,\cP_2)=(N+1)^2+(N-1)^2$, such that \eqref{lbdimeq} holds in this case. 
However, $\Delta s=\{\Delta p_{ij}\}_{i,j=1}^{N-1}\equiv0$ for all $s\in \ker T$, which implies that no choice of 
collocation nodes $y_1,\ldots,y_D$ would lead to a regular system matrix for \eqref{colsys}.

We now reduce $\Pi^2_2$ to a five-dimensional subspace complementary to $\ker T$, by taking
\begin{equation}\label{ex2d'}
\cP=\{P_{ij}\}_{i,j=1}^{N-1},\;\text{ with }P_{ij}=\spann\{1,x_1, x_2,x_1^2, x_2^2\}.
\end{equation}
 Then $S(X,\cG,\cP)$ is an interpolatory overlap spline space with dimension $(N+1)^2$. By using its Lagrange basis,
choosing collocation nodes $Y=X$, and evaluating $\Delta p_{ij}$ at $y_{ij}=x_{ij}$, we obtain for the patches $p_{ij}$
of any overlap spline $s=\{p_{ij}\}_{i,j=1}^{N-1}\in S(X,\cG,\cP)$ the identities
$$
\Delta p_{ij}(x_{ij})=\frac{1}{h^2}\Big(p_{ij}(x_{i-1,j})+p_{ij}(x_{i+1,j})+p_{ij}(x_{i,j-1})+p_{ij}(x_{i,j+1})
-4p_{ij}(x_{ij})\Big),$$
and see that the method of collocation in this setting is equivalent to the finite difference method for \eqref{exPoisson}
with the standard five point stencil.
Note that $P_{ij}$ of \eqref{ex2d'} are the spaces of \textit{least interpolation} \cite{deBoorRon92} for the corresponding 
sets $X_{ij}$.

As an alternative to reducing the polynomial spaces, a somewhat artificial remedy is to extend each set $X_{ij}$
by one point $\tilde x_{ij}$ in $G_{ij}$ not lying in any other $G_{k\ell}$ or on the lines $x_1=ih$ and $x_2=j h$.
Then $S(\tilde X,\cG,\cP_2)$ with $\tilde X=X\cup \{\tilde x_{ij}\}_{i,j=1}^{N-1}$ is interpolatory. Since the Lagrange 
polynomial corresponding to the point $\tilde x_{ij}$ is a multiple of $(x_1-ih)(x_2-jh)$, its Laplacian vanishes at 
$x_{ij}$, and we arrive at the same five point stencil.

\subsubsection*{Meshless Finite Difference Methods}

Overlap spline collocation or discrete least squares for the problem \eqref{opeq} with interpolatory spaces $S(X,\cG,\cP)$
is equivalent to a meshless finite difference method defined as follows. 

\begin{definition}[Meshless Finite Difference Method]\label{mFD}
Given a set $X=\{x_i\}_{i=1}^N\subset G$ of \textit{discretization nodes} and a sequence $Y=(y_j)_{j=1}^M\subset G$ of 
\textit{collocation nodes}, with $M\ge N$, choose for each $j=1,\ldots,M$ a \textit{set of influence} $X_j^\mathrm{ndf}\subset X$ and
a numerical differentiation formula (ndf)
\begin{equation}\label{ndf}
Lv(y_j)\approx\sum_{x_i\in X_j^\mathrm{ndf}}w_{ji} v(x_i),\quad v\in U,
\end{equation}
defined by some weights $w_{ji}\in \RR$. Determine the vector $\hat u\in\RR^N$ from the linear system
\begin{equation}\label{mFDsys}
\sum_{x_i\in X_j^\mathrm{ndf}}w_{ji}\hat u_i = f(y_j), \quad j=1,\ldots,M,
\end{equation}
and use $\hat u_j$ as approximation of $u(x_j)$, $j=1,\ldots,N$, for the solution $u$ of \eqref{opeq}.
\end{definition}

If $M=N$ and the system \eqref{mFDsys} is nonsingular, then $\hat u$ is the unique solution of \eqref{mFDsys}. In the case
$M>N$ an appropriate method for overdetermined systems has to be applied, usually the method of least squares. We expect that
the system matrix of \eqref{mFDsys} is of full rank, which is normally observed in numerical experiments but only rarely
rigorously guaranteed by the design of particular algorithms.

Numerical differentiation formulas \eqref{ndf} are usually obtained by requiring exactness  for all elements of
appropriately chosen finite-dimensional spaces $P_j^\mathrm{ndf}$ of functions whose domain of definition contains $X_j^\mathrm{ndf}$,
\begin{equation}\label{exact}
Lp(y_j)=\sum_{x_i\in X_j^\mathrm{ndf}}w_{ji} p(x_i),\quad \forall p\in P_j^\mathrm{ndf}.
\end{equation}
this presumes, of course, that \eqref{exact} is solvable with respect to $w_{ji}$. If the conditions \eqref{exact} do not
determine the weights  $w_{ji}$ uniquely, then a particular solution needs to be chosen. 

Several variants of the meshless finite difference method have been proposed, depending on how the collocation nodes are
chosen in relation to $X$, and how the formulas \eqref{ndf} are obtained. 

It is easy to see that both the method of collocation and  the method of discrete least squares for an interpolatory overlap
spline space $S(X,\cG,\cP)$ with the Lagrange basis $\ell_k=\{\ell_{ki}\}_{i=1}^m$, $k=1,\ldots,N$, is equivalent to a version
of the meshless finite difference method, where the  $j$th set of influence is the I-set for  
$P_j^\mathrm{ndf}:=P_{\sigma(j)}$
given by $X_j^\mathrm{ndf}:=X_{\sigma(j)}=X\cap G_{\sigma(j)}$,  and the weights of \eqref{ndf} are obtained as
$w_{jk}=L\ell_{k,\sigma(j)}(y_j)$, or, equivalently, as the unique solution  of \eqref{exact}. Then in the case of the method
of collocation the systems \eqref{colsys} and  \eqref{mFDsys} are the same, and hence their solution vectors $c$,
respectively $\hat u$, are identical. In the case of the method of discrete least squares the vector  $c$ of \eqref{lseqa}
gives a least squares solution of the overdetermined system \eqref{mFDsys}. 

Note that in the case when $Lu(y_j):=u(y_j)$ and $y_j\in X_j^\mathrm{ndf}$, the solution of \eqref{exact} is given by 
$w_{ji}=\delta_{ji}$, so that the corresponding equation in \eqref{mFDsys} becomes $\hat u_j=f(y_j)$, which is in particular
a natural way to handle Dirichlet boundary conditions. It does not matter in this case how we choose $X_j^\mathrm{ndf}$.

We now list some particular versions of the interpolatory overlap spline methods that can be interpreted as known
meshless finite difference methods. In all of them we assume that $G$ is a closure of a bounded domain in a smooth 
manifold $\cM$, and \eqref{opeq} is a Dirichlet boundary value problem as long as $G\ne\cM$, that is $Lu(x)=u(x)$ for all
$x\in \partial G$. 

\begin{enumerate}
\item The method of collocation for an interpolatory polynomial overlap spline space $S(X,\cG,\cP_q)$, with $\cM=\RR^d$,
$G=\{G_j\}_{j=1}^N$, where the sets $G_j$ are chosen such that each $X_j=X\cap G_j$, $j=1,\ldots,N$, is an I-set 
for $\Pi^d_q$. In particular, $X_j$ may consist of ${d+q\choose d}$ neighbors of $x_j$, including $x_j$ itself, and the
 collocation nodes and 
the mapping  $\sigma$ be given by $y_j=x_j$  and  $\sigma(j)=j$, for all $j=1,\ldots,N$. Note that, as explained above,
in this case we can also do without choosing $X_j$ for boundary nodes $x_j\in \partial G$ because the corresponding equations in 
\eqref{colsys} and  \eqref{mFDsys} are always $c_j=f(x_j)$, respectively, $\hat u_j=f(x_j)$.
This leads to a meshless finite difference method first considered in \cite{Jensen72}, where $X_j$ is chosen as the set of 
${d+q\choose d}$ nearest neighbors of $x_j$, and subsequently corrected should it fail to be an I-set. More recent approaches,
see for example \cite{D19arxiv,DavySaf21,Seibold08,ShenLvShen09} rely on the selection of $X_j$ by algorithms that generate I-sets 
or subsets of I-sets, such as in classical five point stencil discussed above for the Poisson equation, while trying 
to optimize the accuracy of the numerical  differentiation in \eqref{ndf} or guarantee the solvability of \eqref{mFDsys}.

\item The method of collocation for the polynomial least interpolation overlap spline space $S(X,\cG,\cP_\mathrm{least})$,
where $\cM=\RR^d$,  and each $P_j$ is the least interpolation space of polynomials
\cite{deBoorRon92} for the set $X_j=X\cap G_j$, $j=1,\ldots,N$. An example of the least interpolation overlap spline space
has  already been discussed above  for the Poisson equation solved by  the classical finite difference method with the five
point stencil. The meshless finite difference method that results from this setting in the case of $|X|=|\cG|$, with
$X_j$ being a set of neighbors of $y_j=x_j$  and  $\sigma(j)=j$, $j=1,\ldots,N$, has in fact been considered in 
\cite{DavyOanh11sp}, as a limiting case \cite{Schaback05} of the RBF-FD method with the Gauss kernel for its shape 
parameter going to zero. 

\item The method of collocation for the kernel-based overlap spline space $S(X,\cG,\cP_{K,Q})$, with a positive definite or
conditionally positive definite kernel $K$. The resulting meshless finite difference method, where $|X|=|\cG|$, 
$X_j$ a set of neighbors of $y_j=x_j$  and  $\sigma(j)=j$, $j=1,\ldots,N$, was first considered in 
\cite{TolShir03}. It is commonly called the RBF-FD method in the case when $K$ is a radial basis function, see 
\cite{FFprimer15} and references therein for more details. A recent overview of parameter choices  can be found in
\cite{LeBorneLeinen23}.  In the case of nontrivial $Q\ne\{0\}$ the sets $X_i$ do not have to be total for $Q$. It
suffices that each $X_j$, $j=1,\ldots,N$, is such that the exactness equations  $Lp(x_j)=\sum_{x_i\in X_j}w_{ji} p(x_i)$,
$p\in Q$, are solvable with respect to the weights $w_{ji}$, see \cite{D21}. Then the space $S(X,\cG,\cP_{K,Q})$ may not be
interpolatory, but $\dim P_j|_{X_j}=|X_j|$ holds for all $j$. Hence either an appropriate reduction of the spaces
$P_j=P_{K,X_j,Q}$ or an extension of $X_j$ for all $j$ with $\dim P_j>|X_j|$  will produce an interpolatory space of overlap
splines, similar to the five point stencil case discussed in detail above for the solution of the Poisson equation on the square
with polynomial overlap splines.

\item The method of discrete least squares for the kernel-based overlap spline space $S(X,\cG,\cP_{K,Q})$, with $|X|=|\cG|$, 
$X_j$ being a set of neighbors of $x_j$, $j=1,\ldots,N$. Choose collocation nodes $Y=\{y_i\}_{i=1}^M$ without repetitions, 
$M>N$, and let $\sigma(i)$ be the index of the node $x_j$ closest to $y_i$, that is 
$\|y_i-x_{\sigma(i)}\|=\min_{1\le j\le N}\|y_i-x_j\|$, for some distance in $G$, for example the Euclidean distance for 
$G\subset\RR^d$. This approach has been considered in \cite{TLH21}, which also provides an error analysis for the corresponding
meshless finite difference method. In an earlier paper \cite{LSH17} a similar method was applied after transforming
kernel-based overlap splines $s\in S(X,\cG,\cP_{K,Q})$ into functions $s_\Gamma$ created by a suitable partition of unity
$\Gamma$.

\item The method of discrete least squares for the kernel-based overlap spline space $S(X,\cG,\cP_{K,Q})$, where the sequence
$Y$ is obtained by combining all sets $X_i=\{x_j:j\in J_i\}$  and $\sigma(k)$ is the index $i$ of the set $J_i$ the node
$y_k=x_j$ comes with. In other words,
\begin{align*}
Y&=(x_j,j\in J_1,x_j,j\in J_2,\ldots,x_j,j\in J_m),\\
(\sigma(1),\ldots,\sigma(|Y|)&=(\underbrace{1,\ldots,1}_{|J_1|\text{ times}},\underbrace{2,\ldots,2}_{|J_2|\text{ times}},
\ldots,\underbrace{m,\ldots,m}_{|J_m|\text{ times}}).
\end{align*}
The resulting meshless finite difference method has been considered in \cite{D23}, with the full rank property and
error analysis provided under certain assumptions for elliptic equations \eqref{opeq} on closed manifolds. 

\end{enumerate}

\bibliographystyle{abbrv}

\bibliography{references} %

\end{document}